\documentclass[11pt,reqno]{amsart}
\setlength{\hoffset}{-.5in}
\setlength{\voffset}{-.25in}
\usepackage{latexsym}
\usepackage{graphicx}
\usepackage{fancyhdr,amssymb}
\usepackage{amsmath,mathpazo}
\usepackage{amssymb}
\usepackage{relsize}
\usepackage[normalem]{ulem}

\textwidth=4.5in
\textheight=7.0in

\theoremstyle{plain}
\numberwithin{equation}{section}
\newtheorem{thm}{Theorem}[section]
\newtheorem{theorem}[thm]{Theorem}

\theoremstyle{definition}

\pagestyle{fancy}

\begin{document}
\fancyhead{}
\renewcommand{\headrulewidth}{0pt}
\fancyfoot{}
\fancyfoot[LE,RO]{\medskip \thepage}

\setcounter{page}{1}

\title[Relationship Between the Prime-Counting Function and a Unique Prime Number Sequence]{Relationship Between the Prime-Counting Function and a Unique Prime Number Sequence}
\author{Michael P. May}
\address{443 W 2825 S\\
         Perry, Utah\\
         84302}
\email{mikemay@mst.edu \textit{or} mikemaybbi@gmail.com}

\begin{abstract}

In mathematics, the prime counting function $\pi(x)$ is defined as the function yielding the number of primes less than or equal to a given number $x$. In this paper, we prove that the asymptotic limit of a summation operation performed on a unique subsequence of the prime numbers yields the prime number counting function $\pi(x)$ as $x$ approaches $\infty$. We also show that the prime number count $\pi(n)$ can be estimated with a notable degree of accuracy by performing the summation operation on the subsquence up to a limit $n$.
 
\end{abstract}

\maketitle
\section{GENERATING $\mathnormal{\mathbb{P{'}}}$ AND $\mathnormal{\mathbb{P{''}}}$}

\noindent Consider the prime number subsequence of higher order \cite{OEIS_A333242}

\begin{equation*}
\mathbb{P^{'}} = \left\lbrace {p{'}} \right\rbrace = \left\lbrace 2,5,7,13,19,23,29,31,37,43,47,53,59,61,71,... \right\rbrace.
\end{equation*}

\ 

\noindent It was discovered \cite{HigherOrderPrimeNumberSequences} that $\mathbb{P{'}}$ can be generated via an alternating sum of the prime number subsequences of increasing order, i.e., 

\

\begin{equation}
\mathnormal{\mathbb{P^{'}}={\left\lbrace{(-1)^{n-1}}\left\lbrace{p^{(n)}}\right\rbrace\right\rbrace}_{n=1}^\infty} \label{eq:1}
\end{equation}

\

\noindent where the right-hand side of Eq. \ref{eq:1} is an expression of the alternating sum 

\ \\
\noindent\rule{0.84in}{0.4pt} \par
\medskip
\indent\indent {\fontsize{8pt}{9pt} \selectfont MSC2020: 11A41, 11B05, 11K31 \par}
\indent\indent {\fontsize{8pt}{9pt} \selectfont Key words and phrases: Prime-counting function, Prime numbers, \par}
\indent\indent {\fontsize{8pt}{9pt} \selectfont Higher-order prime number sequences, Prime gaps \par}

\thispagestyle{fancy}

\vfil\eject
\fancyhead{}
\fancyhead[CO]{\hfill RELATIONSHIP OF $\mathnormal{\pi{(x)}}$ TO A PRIME SUBSEQUENCE}
\fancyhead[CE]{M. P. MAY  \hfill}
\renewcommand{\headrulewidth}{0pt}   

\begin{equation}
\left\lbrace{p^{(1)}}\right\rbrace - \left\lbrace{p^{(2)}}\right\rbrace + \left\lbrace{p^{(3)}}\right\rbrace - \left\lbrace{p^{(4)}}\right\rbrace + \left\lbrace{p^{(5)}}\right\rbrace -... \;. \label{eq:2}
\end{equation}

\

\noindent The prime number subsequences of increasing order \cite{PrimesHavingPrimeSubscripts} in Expression \ref{eq:2} are defined as

\begin{equation*}
\left\lbrace{p^{(1)}}\right\rbrace = {\left\lbrace{p_n}\right\rbrace}_{n=1}^\infty = \left\lbrace2,3,5,7,11,13,17,19,23,29,31,37,41,43,...\right\rbrace 
\end{equation*}

\

\begin{equation*}
\left\lbrace{p^{(2)}}\right\rbrace = {\left\lbrace{p_{p_n}}\right\rbrace}_{n=1}^\infty = \left\lbrace3,5,11,17,31,41,59,67,83,109,127,...\right\rbrace 
\end{equation*}

\

\begin{equation*}
\left\lbrace{p^{(3)}}\right\rbrace = {\left\lbrace{p_{p_{p_n}}}\right\rbrace}_{n=1}^\infty = \left\lbrace5,11,31,59,127,179,277,331,...\right\rbrace 
\end{equation*}

\

\begin{equation*}
\left\lbrace{p^{(4)}}\right\rbrace = {\left\lbrace{p_{p_{p_{p_n}}}}\right\rbrace}_{n=1}^\infty = \left\lbrace11,31,127,277,709,...\right\rbrace 
\end{equation*}

\

\begin{equation*}
\left\lbrace{p^{(5)}}\right\rbrace = {\left\lbrace{p_{p_{p_{p_{p_n}}}}}\right\rbrace}_{n=1}^\infty = \left\lbrace31,127,709,...\right\rbrace 
\end{equation*}

\ \\

\noindent and so on and so forth. It is noted for clarification that the operation performed on the right-hand side of Eq. \ref{eq:1} denotes an alternating sum of the \underline{entire sets} of prime number subsequences of increasing order.

\

\noindent The prime number subsequence of higher order $\mathbb{P^{'}}$ can also be generated by performing an alternating sum of the individual elements across the sets. To illustrate this, we arrange the subsequences in Expression \ref{eq:2} side-by-side and sum elements laterally across the rows to create the new $\mathbb{P^{'}}$ subsequence term-by-term as follows:

\newpage

\begin{table}[ht]
\centering
\begin{tabular}{cccccccccc}

(row) & $+p^{(1)}$ & $-p^{(2)}$ & $+p^{(3)}$ & $-p^{(4)}$ & $+p^{(5)}$ & $-p^{(6)}$ & $...$ & $p{'}$ \\ 
&  &  &  &  &  &  &  &  &  \\ 
(1) & 2 & $\longrightarrow$ & $\longrightarrow$ & $\longrightarrow$ & $\longrightarrow$ & $\longrightarrow$ & $\longrightarrow$ &  2 \\ 
(2) & 3 & 3 & $\longrightarrow$ & $\longrightarrow$ & $\longrightarrow$ & $\longrightarrow$ & $\longrightarrow$ &  0 \\ 
(3) & 5 & 5 & 5 & $\longrightarrow$ & $\longrightarrow$ & $\longrightarrow$ & $\longrightarrow$ &  5 \\ 
(4) & 7 & $\longrightarrow$ & $\longrightarrow$ & $\longrightarrow$ & $\longrightarrow$ & $\longrightarrow$ & $\longrightarrow$ &  7 \\ 
(5) & 11 & 11 & 11 & 11 & $\longrightarrow$ & $\longrightarrow$ & $\longrightarrow$ &  0 \\ 
(6) & 13 & $\longrightarrow$ & $\longrightarrow$ & $\longrightarrow$ & $\longrightarrow$ & $\longrightarrow$ & $\longrightarrow$ &  13 \\ 
(7) & 17 & 17 & $\longrightarrow$ & $\longrightarrow$ & $\longrightarrow$ & $\longrightarrow$ & $\longrightarrow$ &  0 \\ 
(8) & 19 & $\longrightarrow$ & $\longrightarrow$ & $\longrightarrow$ & $\longrightarrow$ & $\longrightarrow$ & $\longrightarrow$ &  19 \\ 
(9) & 23 & $\longrightarrow$ & $\longrightarrow$ & $\longrightarrow$ & $\longrightarrow$ & $\longrightarrow$ & $\longrightarrow$ &  23 \\ 
(10) & 29 & $\longrightarrow$ & $\longrightarrow$ & $\longrightarrow$ & $\longrightarrow$ & $\longrightarrow$ & $\longrightarrow$ &  29 \\ 
(11) & 31 & 31 & 31 & 31 & 31 & $\longrightarrow$ & $\longrightarrow$ &  31 \\ 
$\vdots$ & $\vdots$ & $\vdots$ & $\vdots$ & $\vdots$ & $\vdots$ & $\vdots$ & $\vdots$ & $\vdots$  
\end{tabular}
\caption{Alternating Sum of $p^{(n)}$}
\label{table:sum}
\end{table}

\noindent Thus, the infinite prime number subsequence $\mathbb{P^{'}}$ of higher order \cite{OEIS_A333242} emerges in the rightmost column of Table \ref{table:sum}:

\begin{equation*}
\mathnormal{\mathbb{P^{'}} = \left\lbrace {p{'}} \right\rbrace} = \left\lbrace 2,5,7,13,19,23,29,31,37,43,47,53,59,61,71,... \right\rbrace.
\end{equation*}

\

\noindent The prime number subsequence of higher order $\mathbb{P^{'}}$ can also be generated by the N-sieve \cite{HigherOrderPrimeNumberSequences}. We now demonstrate how that is accomplished. Starting with $n=1$, choose the prime number with subscript 1 (i.e., $p_1=2$) as the first term of the subsequence and eliminate that prime number from the natural number line.  Then, proceed forward on $\mathbb{N}$ from $1$ to the next available natural number.  Since $2$ was eliminated from the natural number line in the previous step, one moves forward to the next available natural number that has not been eliminated, which is $3$.  $3$ then becomes the subscript for the next $\mathbb{P^{'}}$ term which is $p_3=5$, and $5$ is then eliminated from the natural number line, and so on and so forth.  Such a sieving operation has been carried out in the chart below for the natural numbers $1$ to $100$. \\

\

\begin{center}
\noindent 1 \raisebox{-2.5pt}{\Huge\textcircled{\raisebox{2pt}{\normalsize 2}}} 3 4 \raisebox{-2.5pt}{\Huge\textcircled{\raisebox{2pt}{\normalsize 5}}} 6 \raisebox{-2.5pt}{\Huge\textcircled{\raisebox{2pt}{\normalsize 7}}} 8 9 10 11 12 \raisebox{-2.5pt}{\Huge\textcircled{\raisebox{2pt}{\normalsize 13}}} 14 15 16 17 18 \raisebox{-2.5pt}{\Huge\textcircled{\raisebox{2pt}{\normalsize 19}}} 20  
\end{center}

\noindent

\begin{center}
\noindent 21 22 \raisebox{-2.5pt}{\Huge\textcircled{\raisebox{2pt}{\normalsize 23}}} 24 25 26 27 28 \raisebox{-2.5pt}{\Huge\textcircled{\raisebox{2pt}{\normalsize 29}}} 30 \raisebox{-2.5pt}{\Huge\textcircled{\raisebox{2pt}{\normalsize 31}}} 32 33 34 35 36 \raisebox{-2.5pt}{\Huge\textcircled{\raisebox{2pt}{\normalsize 37}}} 38 39 40  
\end{center}

\noindent

\begin{center}
\noindent 41 42 \raisebox{-2.5pt}{\Huge\textcircled{\raisebox{2pt}{\normalsize 43}}} 44 45 46 \raisebox{-2.5pt}{\Huge\textcircled{\raisebox{2pt}{\normalsize 47}}} 48 49 50 51 52 \raisebox{-2.5pt}{\Huge\textcircled{\raisebox{2pt}{\normalsize 53}}} 54 55 56 57 58 \raisebox{-2.5pt}{\Huge\textcircled{\raisebox{2pt}{\normalsize 59}}} 60 
\end{center}

\noindent

\begin{center}
\noindent \raisebox{-2.5pt}{\Huge\textcircled{\raisebox{2pt}{\normalsize 61}}} 62 63 64 65 66 67 68 69 70 \raisebox{-2.5pt}{\Huge\textcircled{\raisebox{2pt}{\normalsize 71}}} 72 \raisebox{-2.5pt}{\Huge\textcircled{\raisebox{2pt}{\normalsize 73}}} 74 75 76 77 78 \raisebox{-2.5pt}{\Huge\textcircled{\raisebox{2pt}{\normalsize 79}}} 80 
\end{center}

\noindent

\begin{center}
\noindent  81 82 83 84 85 86 87 88 \raisebox{-2.5pt}{\Huge\textcircled{\raisebox{2pt}{\normalsize 89}}} 90 91 92 93 94 95 96 \raisebox{-2.5pt}{\Huge\textcircled{\raisebox{2pt}{\normalsize 97}}} 98 99 100 
\end{center}

\

\noindent Thus, we may optionally designate $\mathbb{P{'}}$, which has been created via the N-sieve operation above, by the following notation \cite{HigherOrderPrimeNumberSequences} to indicate that the natural numbers $\mathbb{N}$ have been sieved to produce this prime number subsequence:

\begin{equation*}
\mathnormal{\left\lfloor{\raisebox{-.3pt}{\!{\dashuline{\begin{math}\,\mathbb{N}\end{math}}}}}\right\rfloor} = \mathnormal{\mathbb{P{'}}} = \left\lbrace{2,5,7,13,19,23,29,31,37,43,47,53,59,61,71,...}\right\rbrace.
\end{equation*}

\noindent 

\noindent Regardless of the method used to generate $\mathbb{P{'}}$, when the prime numbers in this unique subsequence are applied as indexes to the set of all prime numbers $\mathbb{P^{}}$, one obtains the next higher-order prime number subsequence 

\begin{equation*}
\mathnormal{\mathbb{P^{''}} = \left\lbrace {p{''}} \right\rbrace} = \left\lbrace3,11,17,41,67,83,109,127,157,191,211,241,...\right\rbrace. 
\end{equation*}

\noindent

\noindent By definition \cite{HigherOrderPrimeNumberSequences}, the sequence $\mathbb{P{''}}$ can be generated via the expression

\noindent

\begin{equation}
\mathnormal{\mathbb{P^{''}}={\left\lbrace{(-1)^{n}}\left\lbrace{p^{(n)}}\right\rbrace\right\rbrace}_{n=2}^\infty} \label{eq:3}
\end{equation}

\noindent

\noindent where an expansion of the right-hand side of Eq. \ref{eq:3} is the alternating sum 

\begin{equation}
\left\lbrace{p^{(2)}}\right\rbrace - \left\lbrace{p^{(3)}}\right\rbrace + \left\lbrace{p^{(4)}}\right\rbrace - \left\lbrace{p^{(5)}}\right\rbrace + \left\lbrace{p^{(6)}}\right\rbrace -... \;. \label{eq:4}
\end{equation}

\noindent Further, it has been shown \cite{HigherOrderPrimeNumberSequences} that the subsequences $\mathbb{P^{'}}$ and $\mathbb{P^{''}}$ when added together form the entire set of prime numbers $\mathbb{P^{}}$:

\begin{equation}
\mathnormal{\mathbb{P{}}}=\mathnormal{\mathbb{P{'}}+\mathbb{P{''}}}. \label{eq:5}
\end{equation}

\noindent

\noindent We sketch a short proof of Eq. \ref{eq:5} here:

\noindent

\begin{proof}

\noindent 

\

\noindent It has been established \cite{HigherOrderPrimeNumberSequences} that

\begin{equation*}
\mathnormal{\mathbb{P{'}}} = {\left\lbrace{(-1)^{n-1}}\left\lbrace{p^{(n)}}\right\rbrace\right\rbrace}_{n=1}^\infty = \left\lbrace{p^{(1)}}\right\rbrace - \left\lbrace{p^{(2)}}\right\rbrace + \left\lbrace{p^{(3)}}\right\rbrace - ... 
\end{equation*}

\noindent 

\noindent and

\begin{equation*}
\mathnormal{\mathbb{P{''}}} = {\left\lbrace{(-1)^{n}}\left\lbrace{p^{(n)}}\right\rbrace\right\rbrace}_{n=2}^\infty = \left\lbrace{p^{(2)}}\right\rbrace - \left\lbrace{p^{(3)}}\right\rbrace + \left\lbrace{p^{(4)}}\right\rbrace - ... \; .
\end{equation*}

\ \ \

\noindent Then,

\begin{align*}
\mathnormal{\mathbb{P{'}}+\mathbb{P{''}}} = & \left\lbrace{p^{(1)}}\right\rbrace - \left\lbrace{p^{(2)}}\right\rbrace + \left\lbrace{p^{(3)}}\right\rbrace - ... \\
& \;\;\;\;\;\;\;\;\;\;\;\;\;\;\;\;\;\;\;\; + \\
& \left\lbrace{p^{(2)}}\right\rbrace - \left\lbrace{p^{(3)}}\right\rbrace + \left\lbrace{p^{(4)}} \right\rbrace - ... = \left\lbrace{p^{(1)}}\right\rbrace = \mathnormal{\mathbb{P{}}}.
\end{align*}

\end{proof}

\noindent An interesting property was observed in the relationship between the set of all prime numbers $\mathbb{P{}}$ and the complement prime number sets $\mathbb{P{'}}$ and $\mathbb{P{''}}$. Since $\mathbb{P{''}}=\mathbb{P}_{\mathbb{P{'}}}$, Eq. \ref{eq:5} can be restated as

\begin{align*}
\mathnormal{\mathbb{P{''}}} & \mathnormal{=\mathbb{P{}} - \left\lbrace2,5,7,13,19,23,29,...\right\rbrace} \\
& \mathnormal{=\left\lbrace {p_{{}_{{}_{\mathnormal{{2}}}}},p_{{}_{{}_{{\mathnormal{5}}}}},p_{{}_{{}_{{\mathnormal{7}}}}},p_{{}_{{}_{{\mathnormal{13}}}}},p_{{}_{{}_{{\mathnormal{19}}}}},p_{{}_{{}_{{\mathnormal{23}}}}}, p_{{}_{{}_{{\mathnormal{29}}}}},...}\right\rbrace = \mathbb{P}_{\mathbb{P{'}}}}
\end{align*}

\noindent 

\noindent where the prime numbers of the subsequence $\mathbb{P{'}}$ form the indexes for the complement set of primes $\mathbb{P{''}}$ such that 

\begin{equation*}
\mathnormal{\mathnormal{\mathbb{P{''}}} = \mathnormal{\mathbb{P}_{{\mathbb{P{'}}}} = \lbrace p_k \mid k \in \mathbb{P{'}} \rbrace}}.
\end{equation*}

\noindent{}

\noindent We now calculate the average gap size for $\mathbb{P{'}}$ at $\infty$.

\noindent 

\section{AVERAGE GAP OF $\mathnormal{\mathbb{P{'}}}$}

\noindent We will now derive the asymptotic density for the prime number subsequence $\mathbb{P{'}}$ assuming that $1/\ln{n}$ is the asymptotic density of the set of all prime numbers $\mathbb{P{}}$ at $\infty$. We approach this task via alternately adding and subtracting the prime number densities (or ``probabilities" as they are also called) of the prime number subsequences of increasing order to arrive at a value for the density of $\mathbb{P{'}}$.  We begin by recalling \cite{HigherOrderPrimeNumberSequences} that the prime number subsequence $\mathbb{P{'}}$ is formed by the alternating series

\begin{equation*}
\mathnormal{\mathbb{P{'}}} = {\left\lbrace{(-1)^{n-1}}\left\lbrace{p^{(n)}}\right\rbrace\right\rbrace}_{n=1}^\infty = \left\lbrace{p^{(1)}}\right\rbrace - \left\lbrace{p^{(2)}}\right\rbrace + \left\lbrace{p^{(3)}}\right\rbrace -  ... 
\end{equation*}

\noindent 

\noindent where

\begin{equation*}
\left\lbrace{p^{(k)}}\right\rbrace = \left\lbrace{\mathnormal{p_{p_{._{._{._{p_{n}}}}}}}}\right\rbrace \; \text{($p$ ``$k$" times)}.
\end{equation*}

\

\noindent Broughan and Barnett \cite{PrimesHavingPrimeSubscripts} have shown that for the general case of higher-order superprimes $\mathnormal{p_{p_{._{._{._{p_{n}}}}}}}$, the asymptotic density is approximately

\begin{equation*}
\mathnormal{\dfrac{n}{p_{p_{._{._{._{p_{n}}}}}}} \sim \dfrac{n}{n\,(\ln{n})^k} \sim \dfrac{1}{(\ln{n})^k} } \;\;\; 
\end{equation*}

\noindent 

\noindent for large $n \in \mathbb{N{}}$. 

\

\noindent Now, assuming that $\ln{n}$ is the asymptotic limit of the gap size for the set of all prime numbers $\mathbb{P{}}$ at $\infty$, we derive an expression for the density $d$ for the prime number subsequence $\mathbb{P{'}}$ at $\infty$.  We begin with the geometric series

\begin{equation*}
\mathnormal{S=1-x+x^2-x^3+x^4-x^5+... = \dfrac{1}{1+x} \;\;\; (\lvert x \rvert < 1)}.
\end{equation*}

\noindent Then let

\begin{align*}
\mathnormal{T} \;\; & \mathnormal{= -S + 1} \\
\end{align*}

\noindent so that

\begin{align*}
\mathnormal{T} \;\; & \mathnormal{= x - x^2 + x^3 - x^4 + x^5+...} \\
& \mathnormal{= \dfrac{-1}{1+x}+1} \\
& \mathnormal{= \dfrac{x}{1+x}}.
\end{align*}

\noindent 

\noindent Now substitute $\dfrac{1}{\ln{n}}$ for $x$ to get

\begin{equation*}
\mathnormal{\dfrac {\dfrac{1}{\ln{n}}}{1+\dfrac{1}{\ln{n}}} = \dfrac{1}{\ln{n}+1}}
\end{equation*}

\noindent so that

\begin{align}
\mathnormal{T} = \mathnormal{\mathnormal{{d_{\mathbb{P{'}}}}}} & \mathnormal{ \approx \dfrac{1}{\ln{n}}-\dfrac{1}{(\ln{n})^2}+\dfrac{1}{(\ln{n})^3}-\dfrac{1}{(\ln{n})^4}+...} \\
& \mathnormal{= \dfrac{1}{\ln{n}+1}}.  \label{eq:6}
\end{align}

\noindent Based on our assumption that $1/\ln{n}$ is the asymptotic limit of the density of the set of all prime numbers $\mathbb{P{}}$ as $n \rightarrow \infty$, Eq. \ref{eq:6} provides us with the density (or probability) of the occurrence of the primes $\mathbb{P{'}}$ at $\infty$.  Thus, the average gap $g$ between prime numbers in the subsequence $\mathbb{P{'}}$ on the natural number line as $n \rightarrow \infty$ is the inverse of the density $d$ of $\mathbb{P{'}}$ so that

\begin{align*}
\mathnormal{\mathnormal{g_{\mathbb{P{'}}}}} = \mathnormal{\frac{1}{d_{\mathbb{P{'}}}}} & \approx \mathnormal{\dfrac{1}{\dfrac{1}{\ln{n}}-\dfrac{1}{(\ln{n})^2}+\dfrac{1}{(\ln{n})^3}-\dfrac{1}{(\ln{n})^4}+...}} \\
& = \mathnormal{\ln{n}+1}.
\end{align*}

\noindent 

\noindent Since it has been shown via the N-sieving operation \cite{HigherOrderPrimeNumberSequences} that the prime number subsequence $\mathbb{P{'}}$ has fewer primes than the set of all prime numbers $\mathbb{P{}}$ at $\infty$, it intuitively follows that the average gap size for $\mathbb{P{'}}$ will always be larger than the gap size for $\mathbb{P{}}$ at $\infty$.

\section{ESTIMATING $\mathnormal{\pi{(x)}}$ VIA $\mathnormal{\mathbb{P{'}}}$}

\noindent 

\noindent When we remove the prime number subsequence $\mathbb{P{''}}$ from the set of all prime numbers $\mathbb{P{}}$, we create the prime number subsequence $\mathbb{P{'}}$ \cite{HigherOrderPrimeNumberSequences}.  Further, it was shown in the previous section that the limit of the average gap size between the prime numbers $\mathbb{P{'}}$ is $\ln{n}+1$ as $n$ tends toward $\infty$.  Thus, the increase made to the average gap between the prime numbers $\mathbb{P{'}}$ when discounting the primes $\mathbb{P{''}}$ on the natural number line is unity at $\infty$.  This is equivalently stated by Eq. \ref{eq:7} wherein the average gap size for the set of all prime numbers $\mathbb{P{}}$ is subtracted from the average gap size for the set of all prime numbers $\mathbb{P{'}}$ to yield the average contribution that the removal of the prime numbers $\mathbb{P{''}}$ adds to to produce the average gap size for $\mathbb{P{'}}$ at $\infty$.

\noindent 

\begin{equation}
\mathnormal{\mathnormal{{g_{\mathbb{P{'}}}-g_{\mathbb{P{}}}}} = 1}. \label{eq:7}
\end{equation}

\noindent 

\noindent Fig. 1 provides a visual representation of the operation of Eq. \ref{eq:7} on the natural number line by showing that in all intervals where an element of $\mathbb{P{''}}$ exists, the removal of $\mathbb{P{''}}$ and replacing with a null integer placeholder iteratively increases the average gap size between the remaining prime numbers $\mathbb{P{'}}$ on the natural number line by unity as that operation is carried out to $\infty$.

\begin{center}
\includegraphics[scale=0.37]{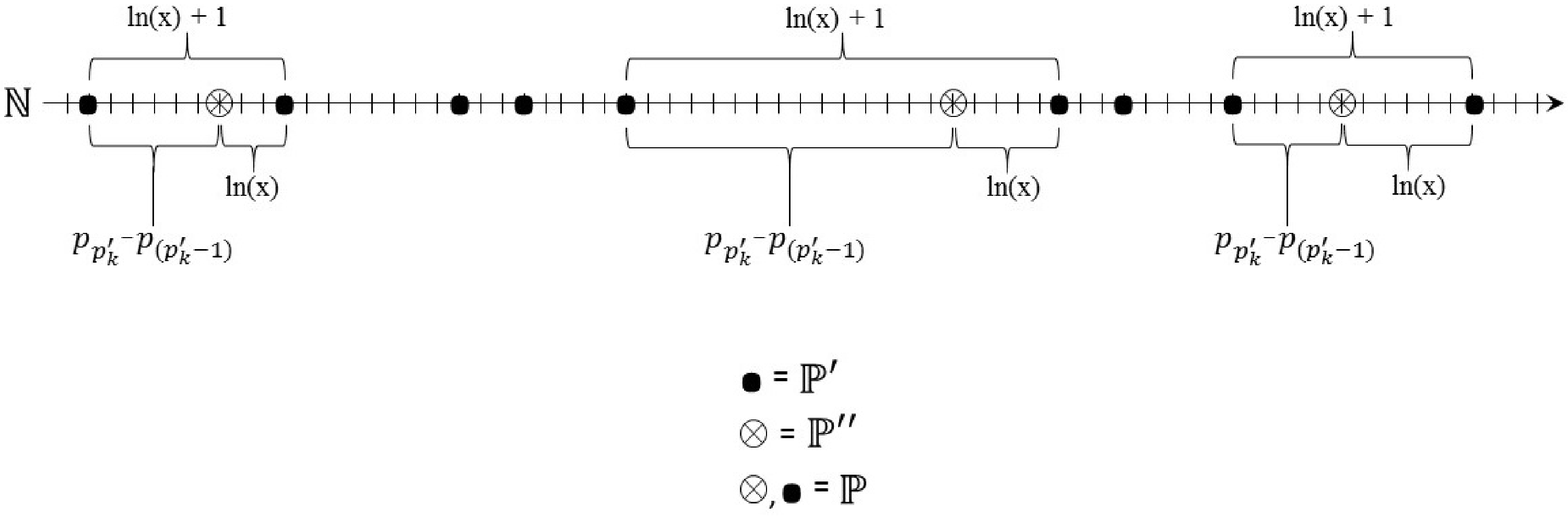}
\end{center}

\begin{center}
\textit{Fig. 1 – Prime Gaps on the Natural Number Line}  
\end{center}

\newpage

\noindent And since

\begin{equation*}
\mathnormal{\mathnormal{\sum\limits_{k=1}^{x} \left( p_{_{p'_{k}}}-p_{_{({p'_{k}-1})}} \right) }}
\end{equation*}

\noindent

\noindent represents the prime gaps \cite{OEIS_A348677} 

\begin{equation*}
\mathnormal{\left\lbrace p_{p'_{n}}-p_{({p'_{n}-1})} \right\rbrace = \left\lbrace 1, 4, 4, 4, 6, 4, 2, 14, 6, 10, 12, 2,...\right\rbrace} 
\end{equation*}

\noindent

\noindent which have been counted as placeholders among the set of all prime numbers $\mathbb{P{}}$ (thereby increasing the gap size between the remaining prime numbers $\mathbb{P{'}}$ from $\ln{n}$ to $\ln{n}+1$ at $\infty$), we arrive at the asymptotic limit

\begin{equation*}
\mathnormal{\mathnormal{{\pi{(x)}}\sim{\sum\limits_{k=1}^{x} \left( p_{_{p'_{k}}}-p_{_{({p'_{k}-1})}} \right) }}},
\end{equation*}

\noindent 

\noindent the sum of which approximates the prime number count $\pi{(x)}$ for the set of all prime numbers $\mathbb{P{}}$ at $\infty$.

\noindent{}

\begin{theorem} \label{1}

The prime number counting function $\pi(x)$ is asymptotically equivalent to an operation performed on a unique subsequence of the prime numbers in that

\begin{equation*}
\mathnormal{\mathnormal{{\pi{(x)}}\sim{\sum\limits_{k=1}^{x} \left( p_{_{p'_{k}}}-p_{_{({p'_{k}-1})}} \right) }}}
\end{equation*}

\noindent 

\noindent which states that the magnitude of the gaps contributed by an operation performed on the unique prime number subsequence $p_{p'}$ as $x$ approaches $\infty$ is asymptotically equivalent to the total number of primes counted by $\pi(x)$ as $x$ approaches $\infty$.

\end{theorem}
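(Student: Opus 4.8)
\noindent The plan is to read the summand $p_{p'_k}-p_{p'_k-1}$ as the prime gap in $\mathbb{P}$ immediately preceding the $k$-th element of $\mathbb{P}''$. Because $\mathbb{P}''=\mathbb{P}_{\mathbb{P}'}$, the elements of $\mathbb{P}''$ occupy exactly the prime-positions indexed by $\mathbb{P}'$, so the $k$-th term of $\mathbb{P}''$ is $p_{p'_k}$ and the summand is the ``placeholder'' gap of Fig.~1 created when that prime is removed. I would therefore take the summation range to run over those $k$ with $p_{p'_k}\le x$, i.e.\ over the removed primes $\mathbb{P}''$ not exceeding $x$, and aim to show that the total accumulated gap length is $\sim\pi(x)$.

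\noindent First I would compute the asymptotic density of $\mathbb{P}''$ on the number line. From $\mathbb{P}=\mathbb{P}'+\mathbb{P}''$ (Eq.~\ref{eq:5}), the assumed density $1/\ln n$ of $\mathbb{P}$, and the density $1/(\ln n+1)$ of $\mathbb{P}'$ from Eq.~\ref{eq:6}, subtraction gives $d_{\mathbb{P}''}\sim \tfrac{1}{\ln n}-\tfrac{1}{\ln n+1}=\tfrac{1}{\ln n\,(\ln n+1)}$. Next I would invoke the assumed spacing $\sim\ln v$ of $\mathbb{P}$ near a value $v$, so that the gap preceding a prime located at $v$ is $\sim\ln v$. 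Converting the sum into a Stieltjes integral against the counting measure of $\mathbb{P}''$ and substituting these two asymptotics, the local-gap factor $\ln v$ cancels one factor of $d_{\mathbb{P}''}$, leaving $\int_2^x \tfrac{dv}{\ln v+1}\sim \tfrac{x}{\ln x}\sim\pi(x)$, which is the desired equivalence; this is also consistent with Eq.~\ref{eq:7}, since the single surplus factor in $g_{\mathbb{P}'}$ is precisely what supplies the extra $\ln v$ that the cancellation consumes.

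\noindent The hard part is that individual prime gaps are not $\sim\ln v$ pointwise: they fluctuate substantially, with gaps far above and far below the mean occurring infinitely often, so replacing $p_{p'_k}-p_{p'_k-1}$ by $\ln p_{p'_k}$ inside the sum is legitimate only on average. The genuine obstruction is thus an equidistribution claim, namely that the positions singled out by $\mathbb{P}''$ are asymptotically uncorrelated with local gap size, so that the fluctuations cancel upon summation; this does not follow from the prime number theorem alone. I would attack it by Abel summation, replacing each pointwise gap by its average over a short interval and bounding the resulting error by a second-moment estimate for prime gaps, while simultaneously upgrading the densities $d_{\mathbb{P}'}$ and $d_{\mathbb{P}''}$ from the formal asymptotics of Section~2 to statements with uniform, summable remainder terms. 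Securing those uniform error bounds --- which ultimately rest on the Broughan--Barnett higher-order density estimates and on justifying the term-by-term substitution $x=1/\ln n$ into the geometric series --- is where I expect the real work to lie.
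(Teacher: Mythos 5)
Your route is in essence the same as the paper's: both arguments reduce the theorem to the single claim that $\sum_{k}\left(p_{p'_k}-p_{p'_k-1}\right)\sim \frac{x}{\ln x+1}$ when the sum runs over the terms with $p_{p'_k}\le x$, after which the conclusion is the elementary limit $\frac{x/\ln x}{x/(\ln x+1)}=\frac{\ln x+1}{\ln x}\to 1$. The difference lies in how that claim about the denominator is obtained. The paper simply asserts that ``a careful examination of Fig.~1'' shows the sum equals $x\left[1-\frac{\ln x}{\ln x+1}\right]$; you actually derive the same asymptotic by pairing the density $d_{\mathbb{P}''}\sim\frac{1}{\ln v\,(\ln v+1)}$ with the mean local gap $\ln v$ and integrating, which is an improvement in that it makes visible exactly what is being assumed. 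You are also right that the key substitution --- replacing each individual gap $p_{p'_k}-p_{p'_k-1}$ by its average $\ln p_{p'_k}$ --- requires an equidistribution statement (that the prime positions selected by $\mathbb{P}''$ are asymptotically uncorrelated with local gap size) which does not follow from the prime number theorem, and that the Section~2 densities would need uniform, summable error terms before the Stieltjes-integral step is legitimate. Be aware that this gap is not closed in the paper either: its proof offers no argument for the denominator asymptotic beyond the appeal to the figure, so your proposal, while incomplete by your own account, is strictly more explicit about where the real difficulty sits. The numerics of Section~4, where the ratio $\pi(p_{p'_N})\big/\sum_{n\le N}\left(p_{p'_n}-p_{p'_n-1}\right)$ hovers near $0.906$ rather than drifting toward $1$, are a warning that settling this step is not a formality.
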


\noindent 


\noindent 

\begin{proof}

\noindent We begin with the asymptotic limit of the prime counting function \cite{IntroAnalyticNumberTheory} 

\begin{equation}
\mathnormal{\pi{(x)} \sim \frac{x}{\ln{x}}} \label{eq:8}
\end{equation}

\noindent to show that as $x \rightarrow \infty$,

\begin{equation}
\mathnormal{\lim_{x \to \infty} {\frac{\mathnormal{\pi{(x)}}}{\sum\limits_{k=1}^{x} \left( p_{_{p'_{k}}}-p_{_{({p'_{k}-1})}} \right) } = 1}}. \label{eq:9}
\end{equation}

\noindent 

\noindent In order to evaluate Limit \ref{eq:9}, we need to express both the numerator and denominator in terms of $x$ and $\ln{x}$. The asymptotic limit of the prime counting function in terms of $x$ and $\ln{x}$ is defined in \ref{eq:8}, and a careful examination of Fig. 1 reveals that the denominator of \ref{eq:9} can be expressed as

\begin{equation}
\mathnormal{x\left[1-\frac{\ln{x}}{\ln{x}+1}\right]} \label{eq:11}
\end{equation}

\noindent so that

\begin{equation*}
\mathnormal{\lim_{x \to \infty} {\frac{\mathnormal{\pi{(x)}}}{\sum\limits_{k=1}^{x} \left( p_{_{p'_{k}}}-p_{_{({p'_{k}-1})}} \right) } = \mathnormal{\frac{\frac{x}{\ln{x}}}{x\left[1-\frac{\ln{x}}{\ln{x}+1}\right]} }} = \frac{\ln{x}+1}{\ln{x}}}.
\end{equation*}

\noindent And clearly,

\begin{equation*}
\mathnormal{\lim_{x \to \infty} {\frac{\ln{x}+1}{\ln{x}} = 1}}.
\end{equation*}

\end{proof}

\noindent We also show that the asymptotic limit of the ratio of $\pi{(x)}$ to the complement of the sum in the denominator of Limit \ref{eq:9}, or

\begin{equation}
\mathnormal{\mathnormal{{x}-{\sum\limits_{k=1}^{x} \left( p_{_{p'_{k}}}-p_{_{({p'_{k}-1})}} \right) }}}, \label{eq:12}
\end{equation}

\noindent converges to zero at $\infty$, implying that the complement Expression \ref{eq:12} is infinitely larger than the count of prime numbers for infinitely large x.

\begin{theorem} \label{1}

The prime number counting function $\pi(x)$ is asymptotically equivalent to zero when evaluated against the complement expression

\begin{equation*}
\mathnormal{\mathnormal{{x}-{\sum\limits_{k=1}^{x} \left( p_{_{p'_{k}}}-p_{_{({p'_{k}-1})}} \right) }}} \label{eq:13}
\end{equation*}

\noindent as $x$ approaches $\infty$.

\end{theorem}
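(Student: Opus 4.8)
The plan is to reduce the statement to the two ingredients already assembled for the preceding theorem: the asymptotic $\pi(x)\sim x/\ln x$ from Eq.~\ref{eq:8}, and the closed form for the summation read off from Fig.~1. First I would recall that the denominator of Limit~\ref{eq:9} was shown to equal the quantity in Eq.~\ref{eq:11},
\[
\sum_{k=1}^{x}\left(p_{_{p'_{k}}}-p_{_{(p'_{k}-1)}}\right)=x\left[1-\frac{\ln x}{\ln x+1}\right],
\]
so that the complement Expression~\ref{eq:12} collapses to a single term,
\[
x-\sum_{k=1}^{x}\left(p_{_{p'_{k}}}-p_{_{(p'_{k}-1)}}\right)=x-x\left[1-\frac{\ln x}{\ln x+1}\right]=\frac{x\ln x}{\ln x+1}.
\]

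Next I would form the ratio of $\pi(x)$ to this complement and insert the asymptotic form of the prime-counting function, exactly as in the previous proof where both numerator and denominator are written in the common variables $x$ and $\ln x$:
\[
\frac{\pi(x)}{x-\sum_{k=1}^{x}\left(p_{_{p'_{k}}}-p_{_{(p'_{k}-1)}}\right)}\sim\frac{\dfrac{x}{\ln x}}{\dfrac{x\ln x}{\ln x+1}}=\frac{\ln x+1}{(\ln x)^{2}}.
\]
It then remains only to observe that the numerator of this last expression grows linearly in $\ln x$ while its denominator grows quadratically, whence
\[
\lim_{x\to\infty}\frac{\ln x+1}{(\ln x)^{2}}=0.
\]
This is precisely the assertion that $\pi(x)$ is asymptotically negligible against the complement expression, i.e.\ that the complement is infinitely larger than the prime count for infinitely large $x$.

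Because the algebra is entirely mechanical once the closed form of Eq.~\ref{eq:11} is available, I expect no computational difficulty in the division or the limit. The step that genuinely carries the argument --- and which I would regard as the main obstacle if it were not already granted --- is the identification of the summation with $x\!\left[1-\ln x/(\ln x+1)\right]$, since this rests on interpreting Fig.~1 together with the average-gap value $\ln x+1$ for $\mathbb{P}'$ obtained in the preceding section. The only point demanding care in the present argument is to ensure that the asymptotic substitution for $\pi(x)$ and the exact expression for the complement are combined consistently, so that the limit is taken on a genuine ratio of comparable quantities rather than on mismatched approximations.
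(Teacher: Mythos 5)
Your proposal follows the paper's own proof essentially verbatim: you take the closed form $x\left[1-\frac{\ln x}{\ln x+1}\right]$ from Eq.~\ref{eq:11}, subtract it from $x$ to get $\frac{x\ln x}{\ln x+1}$ (the paper writes this equivalently as $x\left[1-\frac{1}{\ln x+1}\right]$), substitute $\pi(x)\sim x/\ln x$, and arrive at the same ratio $\frac{\ln x+1}{(\ln x)^2}\to 0$. The approach and all intermediate quantities match the paper's argument, including your observation that the entire weight of the proof rests on the Fig.~1 identification of the sum with Eq.~\ref{eq:11}.
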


\begin{proof}

\noindent We again begin with the asymptotic limit of the prime counting function \cite{IntroAnalyticNumberTheory} in \ref{eq:8} to show that as $x \rightarrow \infty$,

\begin{equation}
\mathnormal{\lim_{x \to \infty} {\frac{\mathnormal{\pi{(x)}}}{x-\sum\limits_{k=1}^{x} \left( p_{_{p'_{k}}}-p_{_{({p'_{k}-1})}} \right) } = 0}}. \label{eq:14}
\end{equation}

\noindent 

\noindent To transform the denominator of the Limit \ref{eq:14} to an expression in terms of $x$ and $\ln{x}$, we recognize that when one subtracts \ref{eq:11} from $x$, we have

\begin{equation*}
\mathnormal{x-x\left[1-\frac{\ln{x}}{\ln{x}+1}\right]=x\left[1-\frac{1}{\ln{x}+1}\right]} \label{eq:15}
\end{equation*}

\noindent so that

\begin{align*}
\mathnormal{\lim_{x \to \infty} {\frac{\mathnormal{\pi{(x)}}}{x-\sum\limits_{k=1}^{x} \left( p_{_{p'_{k}}}-p_{_{({p'_{k}-1})}} \right) }}} & = \mathnormal{\frac{\frac{x}{\ln{x}}}{x\left[1-\frac{1}{\ln{x}+1}\right] }} \\
& \mathnormal{= \frac{\ln{x}+1}{(\ln{x})^2}}.
\end{align*}

\noindent And clearly,

\begin{equation*}
\mathnormal{\lim_{x \to \infty} {\frac{\ln{x}+1}{(\ln{x})^2} = 0}}.
\end{equation*}

\end{proof}

\section{APPROXIMATION OF ${\mathnormal{\pi{(n)}}}$ FOR $\mathnormal{\mathnormal{n < \mathnormal{\infty}}}$}

\noindent 

\noindent It was discovered using Mathematica that the prime number count can be estimated with a notable degree of accuracy (within the bounds of a multiplicative constant) by performing the aforementioned operation on the prime number subsequence of higher-order up to a finite integer $p_{p'_{N}}$, i.e.,

\begin{equation}
\mathnormal{\mathnormal{\pi{(p_{p'_{N}})} \approx C_3*{\sum\limits_{n=1}^{N} \left( p_{_{p'_{n}}}-p_{_{({p'_{n}-1})}} \right) }}}. \label{eq:16}
\end{equation}

\noindent 

\noindent The results of Eq. \ref{eq:16} tabulated below begin at $p_{p'_{N}} \approx 100$ and incrementally go up to $p_{p'_{N}} \approx 10E6$:

\noindent 

\begin{center}
\begin{tabular}{|c|c|c|c|c|c|c|c|}

\hline 
 $p_{p'_{N}}$ & $\pi(p_{p'_{N}})$ & ${\sum\limits_{n=1}^N p_{p'_{n}}-p_{({p'_{n}-1})}}$ & $C_3$ \\
\hline 
1E02 & 25 & 23 & 1.08696  \\
\hline 
1E03 & 168 & 187 & 0.89840  \\
\hline 
1E04 & 1,229 & 1,319 & 0.93177  \\
\hline 
1E05 & 9,592 & 10,651 & 0.90057  \\
\hline 
1E06 & 78,498 & 86,249 & 0.91013  \\
\hline 
2E06 & 148,933 & 165,133 & 0.90190  \\
\hline 
3E06 & 216,816 & 239,893 & 0.90380  \\
\hline 
4E06 & 283,146 & 312,563 & 0.90588  \\
\hline 
5E06 & 348,513 & 384,277 & 0.90693  \\
\hline 
6E06 & 412,849 & 455,401 & 0.90656  \\
\hline 
7E06 & 476,648 & 525,917 & 0.90632  \\
\hline 
8E06 & 539,777 & 595,285 & 0.90675  \\
\hline 
9E06 & 602,489 & 665,345 & 0.90553  \\
\hline 
10E6 & 664,579 & 733,389 & 0.90618  \\
\hline 

\end{tabular}
\end{center}

\noindent 

$ $

\noindent 

\noindent An observance of the data in the table above reveals that the constant $C_3$ appears to oscillate rather tightly around the value

\begin{equation*}
\mathnormal{\frac{\pi{}\sqrt{3}}{6}}
\end{equation*}

\noindent 

\noindent which happens to be the densest packing density possible for identi-cally-sized circles in a plane. This would imply (at least within the range of $p_{p'_{N}}$ in the table) that the ratio of the prime counting function $\pi{(p_{p'_{N}})}$ to the sum of the gaps counted by

\begin{equation*}
\mathnormal{{\sum\limits_{n=1}^{N} \left( p_{_{p'_{n}}}-p_{_{({p'_{n}-1})}} \right)}}
\end{equation*}

\

\noindent closely approximates the density of identical circles packed as tightly as possible in a hexagonal packing arrangement in a plane. More study is needed to determine the convergence or divergence of the constant $C_3$. 

\newpage

\section{CONCLUSION}

\noindent In this paper, we derived an expression for the asymptotic limit of the prime-counting function $\mathnormal{\pi{(x)}}$ as a function of the prime number subsequence of higher order $\mathbb{P{'}}$. We further showed that the expression derived is a good approximation (to within a constant $C_3$) of the prime counting function $\pi(n)$ up to any positive real $N \le 10E6$.



\noindent

\noindent

\noindent

\noindent

\end{document}